\documentclass[10pt]{amsart}
\usepackage{amssymb, amscd, amsmath, amsthm, 
 latexsym, enumerate}

\DeclareMathOperator{\cd}{cd}
\DeclareMathOperator{\hd}{hd}

\newcommand{\Z}{\mathbb Z}

\newtheorem{theorem}{Theorem}
\newtheorem{lemma}[theorem]{Lemma}
\newtheorem{cor}[theorem]{Corollary}

\begin{document}

\title{rank 1 abelian normal subgroups of 2-knot groups}

\author{Jonathan A. Hillman }
\address{School of Mathematics and Statistics\\
     University of Sydney, NSW 2006\\
      Australia }

\email{jonathan.hillman@sydney.edu.au}

\begin{abstract}
If the group of a  2-knot $K$ has an abelian normal subgroup of rank $\geq1$
which is not finitely generated then either $K$ has no minimal Seifert hypersurface
or $K$ is topologically equivalent to  Example 10 of
{\it ``A quick trip through knot theory"}.
\end{abstract}

\keywords{abelian normal subgroup, 2-knot, minimal Seifert hypersurface}

\subjclass{57M45}

\maketitle

If $A$ is an abelian group of rank $\geq1$ which is a normal subgroup of a 2-knot group $\pi$ 
then either $A$ is finitely generated or the commutator subgroup $\pi'$ is not finitely generated,
the Hirsch-Plotkin radical $\sqrt\pi$ is torsion-free and abelian of rank 1,
and $A\leq\sqrt\pi\leq\pi'$. 
In the latter case either $\pi\cong\Phi$,  the group of Example 10 of Fox \cite{Fo}, 
or $\pi$ is a $PD_4$-group. 
(The group $\Phi$ has the presentation $\langle{t,a}\mid{tat^{-1}=a^2}\rangle$,
and $\sqrt\pi=\pi'\cong\mathbb{Z}[\frac12]$.)
If $A$ is finitely generated and of rank $\geq2$ then $A\cong\Z^2,\Z^3$ or $\Z^4$.
(See \cite[Theorem 15.10]{Hi} for these assertions.)
In contrast, any finitely generated abelian group is the centre of some higher-dimensional knot group
\cite{HK78}.

The most familiar constructions of 2-knots involve either (twist-)spinning,
surgery on finite sets of loops in connected sums of copies of $S^3\times{S^1}$
or surgery on a section of the mapping torus of a self-homeomorphism of a 3-manifold.
The associated knot groups either have deficiency 1 or have finitely generated commutator subgroup.
The group $\Phi$ is the only knot group arising from any of these constructions
which has an abelian normal subgroup which is not finitely generated.

It remains unknown whether there are any such 2-knot groups other than $\Phi$.
If $\pi$ is such a group then we may ask whether $\sqrt\pi$ is finitely generated as a 
$\mathbb{Z}[\pi]$-module.
(Since $\sqrt\pi$ is torsion-free and abelian of rank 1,
 this is equivalent to it being the normal closure in $\pi$ of one element.)
Since $Aut(\sqrt\pi)$ is abelian, 
$\pi$ acts on $\sqrt\pi$ through $\pi/\pi'\cong\mathbb{Z}$,
and so the module structure is determined by the action of  a meridian.
If $\sqrt\pi$ is not finitely generated as a $\mathbb{Z}[\pi]$-module,
is it at least a minimax group?
Although we cannot yet answer these questions,
we can establish some restrictions.

We show first that $A$ is finitely generated as a $\Z[\pi]$-module if and
only if $\pi/A$ is finitely presentable.
Our main result is in \S2, 
where we show that Example 10 of \cite {Fo} is the only 2-knot (up to reflections)
with a minimal Seifert hypersurface  and with group of the type considered here.
(There are 2-knots which have no minimal Seifert hypersurface.
See \S5 of Chapter 17 of  \cite{Hi}.)
In \S3 we consider further the cases when $\pi$ is a $PD_4$-group.
We show that $\sqrt\pi/\sqrt\pi\cap\pi''$ is finite of odd order,
$\sqrt\pi$ meets nontrivially every subgroup which is not locally free, 
and $\cd\pi''=3$.
In the final section assume that $A$ is an increasing union of infinite cyclic normal subgroups $C<\pi'$.
This includes the most interesting special case, 
when $\pi$ has infinite centre.

\section{some basic observations}

if $G$ is a group then $G'$, $\zeta{G}$ and $\sqrt{G}$ are the commutator subgroup, 
the centre and the Hirsch-Plotkin radical of $G$, respectively.
(If $G$ is a 2-knot group then $\sqrt{G}$ is just the 
unique maximal nilpotent normal subgroup.)
A group is  {\it almost coherent\/} if every finitely generated subgroup is  $FP_2$.

If $\pi$ is a knot group then $\pi/\pi'\cong\Z$
and the choice of a meridian $t\in\pi$ determines an isomorphism
$\mathbb{Z}[\pi/\pi']\cong\Lambda=\mathbb{Z}[t,t^{-1}]$.

\begin{theorem}
Let $\pi$ be a $2$-knot group with a torsion-free abelian normal subgroup $A$ of rank $1$ such that $A\leq\pi'$.
Then $\pi$ acts on $A$ through $\pi/\pi'$,  so $A\leq\zeta\pi'$, 
and $\mathbb{Q}\otimes{A}\cong\mathbb{Q}[t,t^{-1}]/(pt-q)$ for some relatively prime $p,q$.
The subgroup $A$ is finitely generated as a $\Z[\pi]$-module if and only if $G=\pi/A$
is finitely presentable.
\end{theorem}

\begin{proof}
Since $A$ is torsion-free and of rank 1,
$Aut(A)$ is abelian, 
and so $\pi$ acts on $A$ through $\pi/\pi'$.
Hence $A\leq\zeta\pi'$.
If $t\in\pi$ is a meridian then $tat^{-1}=\frac{q}pa$, 
for some $\frac{q}p\in\mathbb{Q}^\times$ and all $a\in{A}$.
Clearly $\mathbb{Q}\otimes{A}\cong\mathbb{Q}[t,t^{-1}]/(pt-q)$.

Since $A<\pi'$, $G/G'\cong\pi/\pi'$ and so $\Z[G/G']\cong\Z[\pi/\pi']\cong\Lambda$.
The homology spectral sequence for $\pi$ as an extension of $G$ by $A$ 
gives an exact sequence
\[
H_2(\pi;\Lambda)\to{H_2(G;\Lambda)}\to\Lambda\otimes{A}\to{H_1(\pi;\Lambda)}\to{H_1(G;\Lambda)}\to0,
\]
since $H_i(A;\Z)=0$ for $i>1$.
If $G$ is finitely presentable then the $\Lambda$-modules $H_i(\pi;\Lambda)$
and $H_i(G;\Lambda)$ are finitely generated  for $i\leq2$,
since $\Lambda$ is a noetherian ring.
Hence $\Lambda\otimes{A}=A$ is also finitely generated as a $\Lambda$-module.

Conversely, if $A$ is finitely generated as a $\Z[\pi]$-module then it is cyclic,
as a module,  
since the subgroup generated by any finite subset of $A\setminus\{1\}$ is cyclic.
Hence $A$ is normally generated in $\pi$ by one element,
and so $G$ is finitely presentable.
\end{proof}

If $A$ is finitely generated as a $\Lambda$-module then $A\cong\Lambda/(pt-q)$.
Otherwise,  $A$ is divisible by arbitrarily large integers relatively prime to $pq$. 
Moreover, for any $a\not=1$ in $A$ the quotient 
$\pi/\langle\langle{a}\rangle\rangle$ 
is finitely presentable and has an infinite abelian normal subgroup, 
and so has one end.

\begin{theorem}
If $\pi$ is almost coherent then we may assume that $p=1$, i.e., that
$\mathbb{Q}\otimes{A}\cong\mathbb{Q}[t,t^{-1}]/(t-q)$ for some $q\not=0$.
\end{theorem}

\begin{proof}
Let $t$ be a meridian for $\pi$ and $x$ a nontrivial element of $A$.
Then the subgroup $B$ generated by $\{t,x\}$ has the presentation
\[
\langle{t,x}\mid {tx^pt^{-1}=x^q},~t^kxt^{-k}\leftrightharpoons{x},~\forall~k\rangle.
\]
Since $\pi$ is almost coherent, $B$ is $FP_2$, and so is
an HNN  extension with finitely generated base \cite{BS78}.
Since $B$ is solvable, the HNN extension must be ascending,
and so (after replacing $t$ by $t^{-1}$, if necessary) we may assume that $p=1$.
\end{proof}

There are uncountably many torsion-free abelian groups of rank 1.
Only countably many can occur as $\sqrt{G}$ for some finitely presentable group $G$.
A countable abelian group is the centre of some finitely presentable group if 
and only if it is recursively presentable \cite{OH07}.
What else can be said in the present context?
Must $\sqrt\pi$ be minimax?
(The minimax subgroups of $\mathbb{Q}$ are isomorphic to  $\mathbb{Z}[\frac1m]$,
for some $m\geq1$ - see \S7.4 of \cite{Bi}.
Every subgroup of $\mathbb{Q}$ which is not finitely generated
contains such a group,
for some $m>1$.)
 
\section{HNN extensions}

Since $\pi$ is finitely presentable and has abelianization $\mathbb{Z}$,
it is an HNN extension with finitely generated base $B<\pi'$
and associated subgroups \cite{BS78}.
We shall use a stronger assumption on the HNN structure.

A {\it Seifert hypersurface\/} for a 2-knot $K$ is a locally flat
codimension-1 submanifold 
$V\subset{X(K)}=\overline{S^4\setminus{K\times{D^2}}}$
such that $\partial{V}=K$.
It is {\it minimal\/} if the inclusion of $V$ into $X(K)$ is $\pi_1$-injective.
If $K$ has a minimal Seifert hypersurface then $\pi=\pi{K}$ is
an HNN extension $HNN(B,\varphi:I\cong{J})$, 
with base $B=\pi_1(X(K)\setminus{V})$ and associated subgroups $I,J\cong\pi_1(V)$.

\begin{theorem}
\label{HNNs}
Let $\pi=HNN(B;\varphi:I\cong{J})$ be a $2$-knot group which is an HNN extension 
with finitely generated base $B$ and associated subgroups $I,J$,
and such that every abelian normal subgroup of $I$ is finitely generated.
Suppose that $\pi$ has a torsion-free abelian normal subgroup $A$ of rank $1$.
If $I$ and $J$ are each proper subgroups of $B$ then $A\cong\mathbb{Z}$; 
otherwise $A\cong\mathbb{Z}[\frac1m]$, for some $m\geq1$.
\end{theorem}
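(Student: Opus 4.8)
The plan is to let $\pi$ act on the Bass--Serre tree $T$ of the given HNN decomposition, whose vertex stabilizers are the conjugates of $B$ and whose edge stabilizers are the conjugates of $I$, and to recover the isomorphism type of $A$ from the way it acts on $T$. The governing dichotomy is that $\pi$ fixes an end of $T$ exactly when the extension is ascending, i.e. $I=B$ or $J=B$, and fixes no end when $I$ and $J$ are both proper; since $A$ is normal, its action on $T$ is forced by that of $\pi$. First I would dispose of the finitely generated case: a torsion-free abelian group of rank $1$ that is finitely generated is infinite cyclic, consistent with either conclusion, so assume $A$ is not finitely generated. Then $A$ is a non-cyclic subgroup of $\mathbb{Q}$, hence divisible by arbitrarily large integers, so every $a\neq1$ in $A$ has $N$-th roots in $A$ for arbitrarily large $N$. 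I claim no element of $A$ is hyperbolic on $T$: if $a\in A$ had translation length $L\geq1$ and $b^N=a$ with $b\in A$, then $b$ would be hyperbolic as well, and since $\ell(b^N)=N\ell(b)$ with $\ell(b)\geq1$ an integer we would get $N\leq L$, impossible for $N>L$. Thus every element of $A$ acts elliptically, and by the standard theorem of Tits on elliptic groups of tree automorphisms, $A$ either fixes a vertex or fixes a unique end.

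If $A$ fixes a vertex then, as $A$ is normal and $\pi$ acts transitively on vertices, $A$ fixes every vertex, hence every edge, so $A$ lies in every edge stabilizer; normality lets us take this stabilizer to be $I$ itself, so $A\leq I$. But then $A$ is an abelian normal subgroup of $I$, hence finitely generated by hypothesis, a contradiction. Therefore $A$ fixes a unique end $\xi$. By normality the set $\{\xi\}$ is $\pi$-invariant, so $\pi$ fixes $\xi$; in particular the base $B=\mathrm{Stab}(v_0)$ fixes $\xi$, hence fixes pointwise the geodesic ray from $v_0$ to $\xi$ and so fixes its first edge $e_0$. Now $B$ acts on the outgoing (resp. incoming) edges at $v_0$ as it does by left translation on $B/I$ (resp. $B/J$), and a coset $bI$ fixed by all of $B$ forces $B\subseteq bIb^{-1}\subseteq B$, whence $I=B$; by the symmetry interchanging $t\leftrightarrow t^{-1}$ and $I\leftrightarrow J$, a fixed incoming edge forces $J=B$. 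Thus $A$ not finitely generated forces $I=B$ or $J=B$. Contrapositively, if $I$ and $J$ are both proper in $B$ then $A$ is finitely generated, i.e. $A\cong\mathbb{Z}$, which is the first assertion.

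It remains to treat the ascending case, say $I=B$ (the case $J=B$ being identical after replacing $t$ by $t^{-1}$), where by Theorem~2 we may assume $A$ is not finitely generated, so $A\leq\pi'=\bigcup_{n\geq0}B_n$ with $B_n=t^{-n}Bt^n$. Since $A\leq\zeta\pi'$ and $B\leq\pi'$, the base $B$ centralizes $A$ and $\pi$ acts on $A$ only through $t$, as in \S1. Each $A\cap B_n$ is an abelian normal subgroup of $B_n\cong I$, hence infinite cyclic, and conjugation by $t^{-1}$ carries $A\cap B_n$ isomorphically onto $A\cap B_{n+1}$, sending a generator to a generator. Consequently the $\Lambda$-submodule generated by a fixed nontrivial $a_0\in A\cap B_0$ contains the generator $t^{-n}a_0t^n$ of each $A\cap B_n$, and therefore equals $A=\bigcup_n(A\cap B_n)$. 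Thus $A$ is a cyclic $\Lambda$-module, and by the computation recorded in \S1 a rank-$1$ $\Lambda$-module that is finitely generated is isomorphic as an abelian group to $\mathbb{Z}[\frac1{pq}]$; so $A\cong\mathbb{Z}[\frac1m]$ with $m=pq$.

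I expect the main obstacle to be the passage from the combinatorial hypothesis on $I$ and $J$ to the end-fixing behaviour of the action on $T$---precisely the implication that a base group fixing an edge must exhaust an associated subgroup, forcing $I=B$ or $J=B$. Once the tree is in play the other ingredients are comparatively routine: the translation-length estimate ruling out hyperbolic elements of $A$, and the ascending-union argument identifying $A$ as a cyclic $\Lambda$-module and hence as $\mathbb{Z}[\frac1m]$.
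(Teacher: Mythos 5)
Your proof is correct, but it reaches the key dichotomy by a genuinely different route from the paper. The paper argues algebraically inside $\pi'=\langle\langle B\rangle\rangle$: it writes $\pi'$ as an increasing union of iterated amalgamated products $B_{-r,r}$ of copies of $B$ over copies of $I\cong J$, notes $A\leq\zeta\pi'$, and when $I$ and $J$ are both proper quotes Magnus--Karrass--Solitar (Corollary 4.5) to place $\zeta\pi'$ inside $I$, whence $A$ is finitely generated by the hypothesis on $I$. You instead put $\pi$ on the Bass--Serre tree, use the unbounded divisibility of elements of a non-finitely-generated rank-1 group to rule out hyperbolic elements of $A$ (the translation-length estimate $\ell(b^N)=N\ell(b)$ is exactly right), and then run the elliptic dichotomy: a fixed vertex forces $A$ into every edge stabilizer and hence into $I$ (contradiction with non-finite-generation), while a fixed end, being unique and hence $\pi$-invariant, forces $B$ to fix an edge at its base vertex and so forces $I=B$ or $J=B$. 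Your coset computation at that last step is the crux and is correct ($B\subseteq bIb^{-1}\subseteq B$ with $b\in B$ gives $I=B$); you should, strictly speaking, also justify uniqueness of the fixed end (two fixed ends would make $A$ fix the axis between them pointwise, returning you to the vertex-fixing case), but that is standard. The tree argument buys a transparent geometric picture of why normality plus non-finite-generation of $A$ forces the extension to be ascending, at the cost of invoking Tits' elliptic/end dichotomy where the paper only needs a classical fact about centres of amalgams. In the ascending case the two proofs essentially coincide: both identify $A\cap B\cong\mathbb{Z}$ via the hypothesis on $I$, observe that $t$ conjugates it into itself so acts by an integer $m$, and exhibit $A$ as the increasing union $\cup_k t^{-k}(A\cap B)t^k\cong\mathbb{Z}[\frac1m]$; your detour through cyclic $\Lambda$-modules and the citation of Theorem 2 there are harmless but unnecessary.
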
 

\begin{proof}
We may assume that $A\leq\pi'$, for otherwise $A\cong\mathbb{Z}$,
and we may take $m=1$.
Then $Aut(A)\leq\mathbb{Q}^\times$, and so $A\leq\zeta\pi'$.
Let $t$ be the stable letter of the HNN extension.
Let $B_{r,s}$ be the subgroup of $\pi'=\langle\langle{B}\rangle\rangle$
generated by $\cup_{r\leq{k}\leq{s}}t^kBt^{-k}$, for all $r\leq{s}$.
Then $B_{r,s}$ is an iterated generalized free product with amalgamation 
of copies of $B$, amalgamated over copies of $I\cong{J}$,
and $\pi'=\cup_{r\geq0}B_{-r,r}$.
In particular, $B_{q,r-1}\cap{B_{r,s}}\cong{I}$ and
$t^kB_{r,s}t^{-k}=B_{r+k,s+k}$, for all $q<r\leq{s}$ and $k$.
If $I$ and $J$ are each proper subgroups of $B$ then $\zeta\pi'\leq{I}$
\cite[Corollary 4.5]{MKS},
and so $\zeta\pi'$ and $A$ are finitely generated.
If, say, $B=I$ then $B\cap{A}\cong\mathbb{Z}$ 
and $t(B\cap{A})t^{-1}\leq{B\cap{A}}$.
Hence $tat^{-1}=a^m$ for some $m\not=0$  and all $a\in{I}\cap{A}$.
Since $A$ is torsion-free of rank 1, it follows that $tat^{-1}=a^m$ for all
$a\in{A}$.
Moreover,
$A=\cup_{k\geq0}t^{-k}(I\cap{A}t^k)$,
and so $A\cong\mathbb{Z}[\frac1m]$.
\end{proof}

The hypotheses of the first sentence hold if $\pi=\pi{K}$, 
where $K$ has a minimal Seifert hypersurface, 
or if $I$ is a $PD_3$-group.

\begin{cor}
Let $K$ be a $2$-knot with a minimal Seifert hypersurface and such that $\pi=\pi{K}$
has an abelian normal subgroup $A$ which is not finitely generated.
Then $\pi\cong\Phi$ and $K$ is TOP isotopic to  Example 10 of \cite{Fo} or to its reflection.
\end{cor}

\begin{proof}
The subgroup $A$ is torsion-free and of rank 1 \cite[Theorem 15.10]{Hi}.
Since $I=\pi_1(V)$ is a 3-manifold group, 
it is $FP_3$ and its abelian subgroups are finitely generated.
Since $A$ is not finitely generated, it is torsion-free and of rank 1.
Hence the second option must hold, and the HNN extension is ascending.
If $\pi$ is a $PD_4$-group and is an ascending HNN extension with $FP_3$ base
then it follows from \cite[Lemma 3.4]{BG85} that $I=B=\pi'$.
But then $A$ must be finitely generated, contrary to our assumption.
Therefore $\pi\cong\Phi$.
Hence $K$ is TOP isotopic to Example 10 of \cite{Fo}, 
up to reflection \cite{Hi09}.
\end{proof}

The group $\Phi$ is the group of a ribbon 2-knot with a Seifert hypersurface  
$V\cong\overline{S^1\times{S^2}\setminus{D^3}}$ (\cite{Yan69} -- see the appendix below).
This is minimal, since $\pi_1(V)\cong\Z$ and $\Phi'\not=1$.
Hence Example 10 of Fox has a minimal Seifert hypersurface.

Although the surgery argument of \cite[Chapter 17]{Hi} shows that there are 2-knots
with no minimal Seifert hypersurface, 
it remains unknown whether every 2-knot group has an HNN decomposition 
with finitely presentable base and associated subgroups.

If a 2-knot group $\pi$ has two such abelian normal subgroups $A_1\not=A_2$ 
then $A_1A_2$ is a torsion-free,  nilpotent normal subgroup of Hirsch length $\leq2$, 
and so is abelian of rank 1 or 2.
In the latter case $A_1A_2\cong\mathbb{Z}^2$ \cite[Theorem 16.2]{Hi},
and the exponents $m_1$ and $m_2$ must be $\pm1$.
We may assume that $m_1=1$.
Most twist spins of torus knots give examples with $m_1=m_2=1$,
while 2-twist spins of certain Montesinos knots give examples with $m_1=1$ and $m_2=-1$.
(See \cite[Theorem 16.15]{Hi}.)

\section{abelian normal subgroups of $PD_4$-groups}

We assume henceforth that $\pi$ is a 2-knot group which is a $PD_4$-group,
and which has an abelian normal subgroup of rank 1 which is not finitely generated.

\begin{lemma}
Let $G$ be an $FP_2$ group with one end and such that $\cd{G}=2$,
and let $A\cong\mathbb{Z}[\frac1m]$, where $m>1$. 
Then $\cd{A}\times{G}=4$.
\end{lemma}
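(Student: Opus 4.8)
The plan is to prove the two inequalities $c.d.(A\times G)\leq 4$ and $c.d.(A\times G)\geq 4$ separately. Since $A\cong\mathbb{Z}[\frac1m]$ with $m>1$ is a torsion-free abelian group of rank $1$ which is not finitely generated, it is a union of an ascending chain of infinite cyclic subgroups, so a Milnor-sequence argument gives $c.d.A\leq 2$; as $A$ is not free it is not $1$-dimensional, whence $c.d.A=2$. The subadditivity of cohomological dimension over direct products then gives $c.d.(A\times G)\leq c.d.A+c.d.G=4$. First I would record that $G$ is in fact a duality group of dimension $2$: since $G$ is $FP_2$ and $c.d.G=2$ it is $FP$, while $G$ is infinite with one end, so $H^0(G;\mathbb{Z}[G])=H^1(G;\mathbb{Z}[G])=0$, and $H^q(G;\mathbb{Z}[G])=0$ for $q>2$. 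By the Bieri--Eckmann criterion $G$ is then a duality group, with dualizing module $D=H^2(G;\mathbb{Z}[G])$, which is nonzero (as $c.d.G=2$ and $G$ is $FP$), torsion-free, and finitely generated as a $\mathbb{Z}[G]$-module.

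For the lower bound I would exhibit a single coefficient module with nonvanishing $H^4$. Give $\mathbb{Z}[G]$ the structure of a $\mathbb{Z}[A\times G]$-module on which $A$ acts trivially and $G$ acts by left translation. In the Lyndon--Hochschild--Serre spectral sequence $E_2^{p,q}=H^p(A;H^q(G;\mathbb{Z}[G]))\Rightarrow H^{p+q}(A\times G;\mathbb{Z}[G])$ for the extension $1\to G\to A\times G\to A\to 1$, the coefficient groups $H^q(G;\mathbb{Z}[G])$ vanish except when $q=2$, where they equal $D$ carrying the trivial $A$-action. Hence the spectral sequence collapses and $H^4(A\times G;\mathbb{Z}[G])\cong H^2(A;D)$.

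It then remains to show $H^2(A;D)\neq 0$. Writing $A=\bigcup_n C_n$ with $C_n\cong\mathbb{Z}$ of index $m$ in $C_{n+1}$, the Milnor exact sequence for this increasing union gives $H^2(A;D)\cong{\varprojlim}^1_n H^1(C_n;D)$, since $H^2(C_n;D)=0$. Because $D$ carries the trivial action, each restriction map $H^1(C_{n+1};D)\to H^1(C_n;D)$ is multiplication by the index $m$, so $H^2(A;D)\cong{\varprojlim}^1(D\xleftarrow{m}D\xleftarrow{m}\cdots)$.

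The hard part is to see that this ${\varprojlim}^1$ does not vanish, and this is where the hypotheses on $G$ re-enter. The tower fails the Mittag--Leffler condition precisely when $D$ is not $m$-divisible, and then, since $D$ is countable, its ${\varprojlim}^1$ is nonzero. To rule out $D=mD$ I would reduce modulo a prime $p\mid m$: if $D=mD$ then $D\otimes\mathbb{F}_p=0$, and since $D$ is torsion-free the universal coefficient theorem applied to a finite free $\mathbb{Z}[G]$-resolution gives $H^i(G;\mathbb{F}_p[G])=0$ for every $i$, which is impossible for an infinite $FP$ group. Hence $D$ is not $m$-divisible, so $H^2(A;D)\neq 0$ and therefore $c.d.(A\times G)=4$. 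I expect this final step---the non-divisibility of the dualizing module together with the passage from failure of Mittag--Leffler to nonvanishing of ${\varprojlim}^1$ over a countable tower---to be the main obstacle.
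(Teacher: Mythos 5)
Your argument is correct, but it takes a genuinely different route from the paper. The paper writes $A\times G$ as the increasing union of the groups $G_k=\frac1{m^k}\mathbb{Z}\times G$, each of type $FP$ with $c.d.=3$ and with $H^s(G_k;F)=0$ for free $F$ and $s<3$, invokes Gildenhuys--Strebel (Theorem 3.1 of \cite{GS81}) to conclude that $H^s(A\times G;F)=0$ for all free $F$ and $s\leq3$, and then uses the fact that cohomological dimension is attained on free coefficient modules \cite[Proposition 5.1(a)]{Bi} to force $c.d.(A\times G)=4$. You instead exhibit an explicit non-free coefficient module with nonvanishing $H^4$: you identify $G$ as a $2$-dimensional duality group, collapse the LHS spectral sequence to get $H^4(A\times G;\mathbb{Z}[G])\cong H^2(A;D)$, and show this $\varprojlim^1$ is nonzero because the dualizing module $D$ is not $m$-divisible (detected mod a prime $p\mid m$ via the coefficient sequence $0\to\mathbb{Z}[G]\to\mathbb{Z}[G]\to\mathbb{F}_p[G]\to0$). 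In effect you have unwound the $\varprojlim^1$ mechanism that powers the Gildenhuys--Strebel theorem in this special case; what you gain is a self-contained and more explanatory argument (it shows exactly where the extra dimension comes from), at the cost of length. One point to tighten: the Bieri--Eckmann criterion takes torsion-freeness of $H^2(G;\mathbb{Z}[G])$ as a \emph{hypothesis} rather than delivering it, so you should either cite Bieri's theorem that a one-ended $FP$ group of cohomological dimension $2$ is a duality group, or observe directly that the $p$-torsion of $D$ injects into $H^1(G;\mathbb{F}_p[G])$, which vanishes since $G$ is finitely generated with one end --- the same long exact sequence you already use in your final step. With that noted, your "precisely when $D$ is not $m$-divisible" characterization of Mittag--Leffler failure is valid (it needs $D$ torsion-free so that the images $m^kD$ are strictly decreasing), and the rest of the argument goes through.
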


\begin{proof}
Let $A_k=\frac1{m^k}\mathbb{Z}$.
Then $G_k=A_k\times{G}$ is $FP$, $\cd{G_k}=3$ 
and $H^s(G_k;F)=0$ for any free $\mathbb{Z}[G_k]$-module $F$ and $s<3$,
and for all $k\geq0$.
Moreover,  $G_k<{G_{k+1}}$ and $[G_{k+1}:G_k]=m<\infty$ for all $k$,
and $A\times{G}\cong\cup_{k\geq0}{G_k}$.
Hence $H^s(A\times{G};\mathcal{F})=0$ for any free $\mathbb{Z}[A\times{G}]$-module 
$\mathcal{F}$ and $s\leq3$ \cite[Theorem 3.1]{GS81}.
Since $\cd{A}\times{G}\leq\cd{A}+\cd{G}=4$, the lemma follows from \cite[Proposition 5.1(a)]{Bi}.
\end{proof}

Does the conclusion of this lemma hold if $FP_2$ is weakened to finitely generated?

\begin{theorem}
\label{fcodd}
Let $\pi$ be a $2$-knot group which is a $PD_4$-group with an abelian normal subgroup $A$ 
 which is not finitely generated.
Then $A\leq\pi'$ and $A/A\cap\pi''$ is finite cyclic of odd order.
\end{theorem}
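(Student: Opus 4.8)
The plan is to study the image of $A$ in the Alexander module $M=\pi'/\pi''$ and reduce the claim to a cohomological dimension estimate handled by the preceding lemma. Since $A$ is not finitely generated, the structural result of \cite[Chapter 15]{Hi} gives $A\leq\sqrt\pi\leq\pi'$, which is the first assertion; as $Aut(A)$ is abelian we also get $A\leq\zeta\pi'$, and a chosen meridian $t$ acts by some $\tfrac{q}{p}\in\mathbb{Q}^\times$ in lowest terms. The module $M$ is finitely generated over the Noetherian ring $\Lambda$ and $t-1$ acts invertibly on it. Writing $\bar{A}=A/A\cap\pi''$, the relation $tat^{-1}=\tfrac{q}{p}a$ gives $p(t\bar{a})=q\bar{a}$ for every $\bar{a}$, so $\bar{A}$ is a $t$-invariant subgroup of the $\Lambda$-submodule $M'=\ker(pt-q)$, on which $\Lambda$ acts through $\Lambda/(pt-q)\cong\mathbb{Z}[\tfrac1{pq}]$.

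First I would dispose of $\tfrac{q}{p}=\pm1$. If $\tfrac{q}{p}=1$ then $M'\leq\ker(t-1)=0$, so $A\leq\pi''$ and $\bar{A}$ is trivial. If $\tfrac{q}{p}=-1$ then $M'\leq\ker(t+1)$ is finitely generated over $\Lambda/(t+1)=\mathbb{Z}$, hence a finitely generated abelian group on which $t-1$ acts as $-2$ and invertibly; it is therefore uniquely $2$-divisible, so finite of odd order. As $\bar{A}$ is a quotient of the rank $1$ group $A$ it is locally cyclic, and a finite locally cyclic group is cyclic; so in these two cases $\bar{A}$ is already finite cyclic of odd order. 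I may thus assume $\tfrac{q}{p}\neq\pm1$, whence $C_\pi(A)=\pi'$.

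For the remaining case the key observation is that $t-1$ restricts to an automorphism of $M'$: if $(t-1)x\in M'$ then $(pt-q)x\in\ker(t-1)=0$, so $x\in M'$. On $M'$ this is multiplication by $\tfrac{q-p}{p}$, so $q-p$ and $pq$ act invertibly, and $M'$ becomes a finitely generated module over the principal ideal domain $R=\mathbb{Z}[\tfrac1{pq(q-p)}]$. Its torsion submodule is therefore finite; and since $\gcd(p,q)=1$ forces $pq(q-p)$ to be even, $2$ is a unit in $R$, so this torsion submodule has odd order. Consequently it suffices to prove $\bar{A}$ finite: if $A\cap\pi''\neq1$ then $A\cap\pi''$ has rank $1$, so $\bar{A}$ is a torsion submodule of $M'$, hence finite of odd order and (being locally cyclic) cyclic; and $\bar{A}$ cannot be infinite torsion. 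So the theorem comes down to excluding $A\cap\pi''=1$.

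To exclude it I would argue by cohomological dimension. Suppose $A\cap\pi''=1$. For any finitely generated one-ended subgroup $G\leq\pi''$ with $c.d.\,G=2$ we have $A\cap G=1$ and $[A,G]=1$ (as $A\leq\zeta\pi'$), so $A\times G\leq\pi$; almost coherence makes $G$ an $FP_2$ group, and the method of the preceding lemma, applied to any exhaustion of $A$ by infinite cyclic subgroups, gives $c.d.(A\times G)=4$. But $A\times G\leq A\pi''\leq\pi'$ has infinite index in the $PD_4$-group $\pi$, so $c.d.(A\times G)\leq3$ by Strebel's theorem \cite{St77}, a contradiction. It remains only to produce such a $G$, and this is the main obstacle. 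One sees readily that $\pi'$ is not locally free, for a nontrivial $a\in A$ would be central in any finitely generated free subgroup containing it, forcing that subgroup to be cyclic and hence $\pi'$ abelian, so that $\pi$ would be solvable with $c.d.\,\pi\leq3$; but the central element argument only places a one-ended, $c.d.\,2$ subgroup inside $\pi'$, whereas I need one inside $\pi''$ so that it automatically meets $A$ trivially. I expect closing this gap to require the HNN decomposition of $\pi$ together with the Algebraic Core and $PD_4$-pair analysis of \S3, using accessibility to split off a one-ended piece lying in $\pi''$ and the boundary $PD_3$-group structure to pin its cohomological dimension down to $2$, exactly as that section exploits such structure to contradict the possibilities $A\cap S=1$ and $A\cap S\cong\mathbb{Z}$.
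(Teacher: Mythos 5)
Your reduction of the theorem to excluding $A\cap\pi''=1$ is correct, and your endgame (a finitely generated one-ended $FP_2$ subgroup $G$ with $c.d.\,G=2$ and $A\cap G=1$ forces $c.d.(A\times G)=4$, contradicting Strebel's bound $c.d.\,\pi'\leq3$) is exactly the contradiction the paper runs. But the step you flag as ``the main obstacle'' --- actually producing such a subgroup --- is a genuine gap, and the route you sketch for closing it (the Algebraic Core and $PD_4$-pair analysis of \S3) will not work unconditionally: that part of \S3 is explicitly conditional on $G=\pi/C$ being virtually a $PD_3$-group and on such groups being $3$-manifold groups, neither of which is available here. Moreover you are looking for the one-ended subgroup inside $\pi''$ so that disjointness from $A$ comes for free, and nothing in your argument locates one there.

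The paper's proof sidesteps this by exploiting the hypothesis $A\cap\pi''=1$ itself. First, almost coherence applied to the two-generator subgroup $\langle t,x\rangle$ (an ascending HNN extension, being solvable and $FP_2$) normalizes the meridian action to $p=1$. Then $A$ embeds in the knot module $\pi'/\pi''$ as a rank-one torsion-free submodule with $t$ acting by $q$, hence is a direct summand of a submodule of finite index; since $A$ is central in $\pi'$, this module splitting lifts to a finite-index subgroup $\rho\leq\pi'$ with $\rho\cong A\times\sigma$. A Hirsch-length count ($h.d.\,\pi'=c.d.\,\pi'=3$, $h.d.\,A=1$) gives $h.d.\,\sigma=c.d.\,\sigma=2$, so $\sigma$ is not locally free, and any finitely generated, non-free, freely indecomposable subgroup $\nu\leq\sigma$ is one-ended with $c.d.\,\nu=2$ and satisfies $A\cap\nu=1$ \emph{by construction of the splitting}. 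That is the missing idea: do not hunt for the one-ended subgroup inside $\pi''$, but inside an explicit complement of $A$ furnished by the module structure. Your handling of the torsion analysis once $A\cap\pi''\neq1$ (finiteness of the $\mathbb{Z}$-torsion of the knot module, oddness from invertibility of $t-1$, cyclicity from local cyclicity of $\bar A$) agrees with the paper and is fine.
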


\begin{proof}
Non-trivial subgroups of $\pi/\pi'\cong\Z$ are free abelian groups.
Since $A$ is torsion-free of rank 1 \cite[Theorem 15.10]{Hi}
but is not finitely generated it has no free direct summands, 
and so $A<\pi'$.

If $A\cap\pi''=1$ then $A$ embeds in the knot module $\pi'/\pi''$,
and so is a direct summand of a submodule of finite index.
Let $M$ be a complementary summand, and let $\sigma$ be the preimage of $M$ in $\pi$.
Then $A\sigma\cong{A}\times\sigma$, and so $\Z\times\sigma$ is a subgroup of $\pi'$.
Since $\pi$ is a $PD_4$-group and $\pi/\pi'\cong\mathbb{Z}$, 
$\hd\pi'\geq3$, while $\cd\pi'\leq3$ by \cite{St77}.
Hence $\hd\pi'=\cd\pi' =3$.
Since $\mathbb{Z}\times\sigma\leq\pi'$, we see that $\cd\sigma\leq2$.
Since $\hd{A}=1$ and $A\sigma$ has finite index in $\pi'$, 
we have  $1+\hd\sigma=\hd\pi'=3$,
and so $\hd\sigma=2$.
Therefore $\sigma$ is not locally free.
Let $\nu$ be a finitely generated subgroup of $\sigma$ which is not free,
and is indecomposable as a free product.
Then $\nu$ has one end and $\hd\nu=2$,
and so $\cd{A}\times\nu=4$, by Lemma 5.
But this contradicts $\cd\pi'=3$.
Hence $A/A\cap\pi''\not=1$.

Since $A/A\cap\pi''$ is a $\mathbb{Z}$-torsion submodule of the knot module $\pi'/\pi''$,
it is finite,  and since $A$ is torsion-free of rank 1 any finite quotient is cyclic.
It is of odd order, since $t$ and $t-1$ each act invertibly on $\pi'/\pi''$.
\end{proof}

If $\mathbb{Q}\otimes{A}\cong\mathbb{Q}[t,t^{-1}]/(pt-q)$ then $pt-q$ annihilates $A$,
and so the order of $A/A\cap\pi''$ is relatively prime to $p$, $q$ and $p-q$.

\begin{cor}
The characteristic class in $H^2(\pi'/A;A)$ for $\pi'$ as an extension 
of $\pi'/A$ by $A$ has infinite order. 
\end{cor}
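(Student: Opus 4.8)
The plan is to read off the order of the characteristic class directly from the low-degree homology exact sequence of the extension. Write $G=\pi'/A$. Since $A\leq\zeta\pi'$, the sequence $1\to A\to\pi'\to G\to1$ is a central extension with trivial $G$-action on $A$, so it is indeed classified by a class $\alpha\in H^2(G;A)$, the characteristic class of the statement. My strategy is to identify the image of $\alpha$ under the universal coefficient map with a transgression whose image is $A\cap\pi''$, and then to use that $A$ is torsion free, together with the Theorem (which makes $A\cap\pi''$ of finite index in $A$, hence nontrivial), to conclude that this transgression, and therefore $\alpha$ itself, cannot have finite order.

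More precisely, I would first write down the five-term exact sequence in integral homology for the extension,
\[
H_2(\pi')\to H_2(G)\xrightarrow{\ \partial\ }A\xrightarrow{\ \iota\ }H_1(\pi')\to H_1(G)\to0,
\]
in which $\iota$ is induced by the inclusion $A\hookrightarrow\pi'$ followed by abelianization. Then $\ker\iota=A\cap\pi''$, and by exactness the image of $\partial$ is exactly $A\cap\pi''$. By the Theorem the quotient $A/A\cap\pi''$ is finite; since $A$ is infinite, $A\cap\pi''$ is nontrivial, and hence $\partial\neq0$.

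Next I would invoke the standard description of the transgression for a central extension: the map $\partial\colon H_2(G)\to A$ above coincides, up to sign, with the image of the classifying class $\alpha$ under the universal coefficient homomorphism $H^2(G;A)\to\mathrm{Hom}(H_2(G),A)$. Because that homomorphism is additive, if $\alpha$ had finite order then so would $\partial$. But $\partial$ is a nonzero homomorphism into the torsion-free group $A$: choosing $c$ with $\partial(c)\neq0$ we have $n\,\partial(c)\neq0$ for every $n\neq0$, so $n\partial\neq0$, and $\partial$ has infinite order. Therefore $\alpha$ has infinite order, as claimed.

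The argument is short precisely because the Theorem already supplies the one nontrivial input, namely that $A$ does not embed into the abelianization $\pi'/\pi''$. The only point that genuinely needs care is the compatibility asserted in the third paragraph, that the homology transgression in the five-term sequence really is the image of the cohomological extension class under universal coefficients; I would cite this as the classical description of the transgression for a central extension rather than reprove it. A minor caveat worth noting is that both the five-term sequence and the universal coefficient sequence are available here without finiteness hypotheses on $G$, so the fact that $A$ (and $G$) are not finitely generated presents no obstacle.
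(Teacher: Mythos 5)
Your proof is correct and follows essentially the same route as the paper: both identify the image of the characteristic class under the universal coefficient map with the connecting homomorphism in the five-term exact sequence, observe that its image is $A\cap\pi''$, which is nontrivial (indeed infinite) because $A$ is infinite and $A/A\cap\pi''$ is finite by the Theorem, and conclude from torsion-freeness of $A$ that the class has infinite order. The only difference is cosmetic: the paper cites a reference for the compatibility of the transgression with the extension class, where you invoke it as classical.
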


\begin{proof}
The image of the characteristic class for the extension in $Hom(H_2(\pi'/A;\mathbb{Z}),A)$ 
is the connecting homomorphism 
$\delta$ in the five-term exact sequence of low degree
\begin{equation*}
\begin{CD}
H_2(\pi';\mathbb{Z})\to{H_2(\pi'/A;\mathbb{Z})}@>\delta>>{A}\to\pi'/\pi''\to\pi'/A\pi''\to0.
\end{CD}
\end{equation*}
(See \cite[Theorem 4]{Hi15}.)
The corollary follows since $A$ is infinite and torsion-free, and its image in $\pi'/\pi''$ is finite.
\end{proof}

In particular, $\pi'$ does not have a subgroup of finite index which splits as 
a direct product $A\times\sigma$.
In fact it is clear from the proof of Theorem \ref{fcodd} that no subgroup $\tau\leq\pi'$ 
which contains $A$ and such that $\hd\tau=3$ can split as such a direct product.

If $\pi$ is as in Theorem \ref{fcodd} and $\pi\not\cong\Phi$ 
then it is not elementary amenable \cite[Theorem 15.14]{Hi}.
Hence $\pi'/A$ is not locally finite.
Must it have an element of infinite order?
Since $\hd\pi'=\cd\pi'=3$, we may write $\pi'=\cup_{k\geq0}P_k$
as a union of finitely generated subgroups $P_k$ with $\hd{P_k}=\cd{P_k}=3$
and $A\cap{P_k}\not=1$.
(Hence $\zeta{P_k}$ is infinite.)

We may also show that $A$ meets non-trivially subgroups which are not locally free.

\begin{theorem}
Let $\pi$ be a $2$-knot group which is an almost coherent $PD_4$-group 
with an abelian normal subgroup $A$ which is not finitely generated.
Let $N$ be a subgroup of $\pi'$ which is not locally free. 
Then $A\cap\zeta{N}\not=1$.
\end{theorem}

\begin{proof}
Since $A$ is not finitely generated, it has rank 1 and $A\leq\pi'$.
Moreover, $A\leq\zeta\pi'$, by Theorem 1.
Hence if $A\cap{N}=1$ then $AN\cong{A}\times{N}$.
If $\cd{N}=3$ then $\cd\mathbb{Z}\times{N}=4$,
so ${A\times{N}}$ would have finite index in $\pi$,
and $A$ would be finitely generated.
Therefore we may assume that $\cd{N}=2$. 
Since $N$ is not locally free it has a finitely generated subgroup $\nu$ with one end.
Since $\pi$ is almost coherent, $\nu$ is $FP_2$.
Hence $\cd{A}\times\nu=4$, by Lemma 5.
Since $\cd{A}\nu\leq3$, we must have $A\cap{N}\not=1$,
and $A\cap{N}=A\cap\zeta{N}$, since $A\leq\zeta\pi'$.
\end{proof}

If $N$ is a locally free subgroup which is not abelian then $A\cap{N}=1$.
(However, $\Phi'$ is both abelian and locally free.)

\section{infinite cyclic normal subgroups}

The group $A$ has infinite cyclic subgroups which are normal in $\pi$ if and only if $\frac{q}p=\pm1$. 
This includes the most interesting special case,
when $\pi$ is a $PD_4$-group and $\zeta\pi\not=1$.


\begin{lemma}
If a $2$-knot group $\pi$ has a normal subgroup $C\cong\mathbb{Z}$
such that $C<\pi'$ then either $C\leq\zeta\pi\cap\pi''$ 
or $[\pi:C_\pi(C)]\leq2$ and $C/C\cap\pi''$ is finite cyclic of odd order. 
\end{lemma}

\begin{proof}
This follows easily from the facts that $Aut(C)=\{\pm1\}$ and that $t-1$ acts
invertibly on $\pi'/\pi''$, since $\pi/\pi'\cong\Z$.
\end{proof}

For example, we may take the group $\pi$ with presentation
\[
\langle{t,a,b,c,d,e,f,z}\mid[a,b][c,d][e,f]=z^m,~az=za,~cz=zc,~ez=ze,
\]
\[
tat^{-1}=f,~tbt^{-1}=ef,~tct^{-1}=d,~tdt^{-1}=cd,~tet^{-1}=b, ~tft^{-1}=ab,
\]
\[tzt^{-1}=z^{-1}\rangle,
\]
with $m$ odd. 
It is easily seen that $\pi$ is the normal closure of $t$,
while $\sqrt\pi=\langle{z}\rangle$ and $\sqrt\pi/\sqrt\pi\cap\pi''\cong\mathbb{Z}/m\mathbb{Z}$.
This is the fundamental group of the mapping torus of a self-homeomorphism of
a $\widetilde{\mathbb{SL}}$-manifold, 
and surgery on a section of this mapping torus gives a 1-connected 4-manifold. 
The cocore of such a surgery is a 2-knot $K$, with knot group $\pi{K}\cong\pi$.
The knot manifold $M(K)$ is also the total space of an $S^1$-bundle 
over a non-orientable 3-manifold.
However $\sqrt\pi=\langle{z}\rangle\cong\Z$, 
and so $\pi$ has no abelian normal subgroup which is not finitely generated.

\begin{theorem}
Let $\pi$ be a $2$-knot group which is an almost coherent $PD_4$-group 
with an abelian normal subgroup $A$ which is not finitely generated.
If $\pi$ has a normal subgroup $N<\pi'$ and such that $\cd{N}=2$ then $[\pi:C_\pi(A)]\leq2$, 
$N'$ is free of rank $>1$ and $N$ is not finitely generated.
\end{theorem}

\begin{proof}
Since $\cd{N}=2$ and $N$ is not locally cyclic, 
either $N\cong\mathbb{Z}^2$ or $N'$ is free of rank $>0$ and 
$\zeta{N}\cong\mathbb{Z}$ \cite[Theorem 8.8]{Bi}.
In each case, $A\cap{N}\cong\mathbb{Z}$.
If $N$ is normal in $\pi$ then so is $A\cap{N}$, and so $\frac{q}p=\pm1$.
Hence $[\pi:C_\pi(A)]\leq2$.
Moreover $N'$ must then be free of rank $>1$, since $\sqrt{N}\leq\sqrt\pi=A$.

Suppose that $N$ is finitely generated.
Then $N/\zeta{N}$ is virtually free \cite[Theorem 8.4]{Bi},
and so has a characteristic subgroup of finite index 
which is a free group of finite rank.
Let $\overline{N}$ be the preimage of this subgroup in $\pi$.
Then  $\overline{N}$ is normal in $\pi$.
Since $\overline{N}\cong\Z\times{F(r)}$ for some $r\geq1$, 
it is a 2-dimensional duality group, with dualizing module 
$\mathcal{D}=H^2(\overline{N};\Z[\overline{N}])$.
Hence $H^2(\overline{N};\Z[\pi])\cong\mathcal{D}\otimes\Z[\pi/\overline{N}]$,
while $H^q(\overline{N};\Z[\pi])=0$ if $q\not=2$.

The LHS spectral sequence for $\pi$ (as an extension of $\pi/\overline{N}$ by $\overline{N}$,
with coefficients $\Z[\pi]$) collapses to give an isomorphism
\[
H^4(\pi;\mathbb{Z}[\pi])\cong
{H^2(\pi/\overline{N};H^2(\overline{N};\mathbb{Z}[\pi]))}\cong 
{H^2(\pi/\overline{N};\Z[\pi/\overline{N}])}\otimes\mathcal{D}.
\]
Since $\pi$ is a $PD_4$-group this is only possible if $\mathcal{D}\cong\Z$.
But then $r=1$, so $\overline{N}\cong\Z^2$ and hence $h(\sqrt\pi)\geq2$.
But $\sqrt\pi$ is torsion-free abelian of rank 1, by \cite[Theorem 15.7]{Hi}.
Therefore $N$ cannot be finitely generated.
\end{proof}

If $\pi$ is a $PD_4$-group and has a normal subgroup $C\cong\mathbb{Z}$ 
then $G=\pi/C$ is finitely presentable,
and an LHSSS argument shows that $H^3(G;\mathbb{Z}[G])\cong\mathbb{Z}$,
while  $H^q(G;\mathbb{Z}[G])=0$ if $q\not=3$.
These conditions may imply that $G$ is virtually a $PD_3$-group.
If so, it would follow that if $A$ is an abelian normal subgroup of rank 1
and $\pi$ acts on $A$ through $\pm1$ then $A$ must be finitely generated.
For we may assume that $A\leq\pi'$. 
Let $C<A$ be an infinite cyclic subgroup.
Then $A/C$ is an abelian normal torsion subgroup of $G=\pi/C$,
and so must be finite.

\begin{theorem}
Let $\pi$ be a $2$-knot group which is an almost coherent $PD_4$-group 
with an abelian normal subgroup $A$ which is not finitely generated.
If $[\pi:C_\pi(A)]$ is finite then $\pi''$ is not finitely generated.  
If $\pi$ is an ascending HNN extension then $[\pi:C_\pi(A)]=\infty$ and
$\pi''$ is not $FP_3$.
In either case,  $\cd\pi''=3$.
\end{theorem}

\begin{proof}
If $[\pi:C_\pi(A)]$ is finite then $\pi$ acts on $A$ through $\pm1$.
Hence $[\pi:C_\pi(A)]\leq2$,
and non-trivial elements of $A$ generate infinite cyclic subgroups $C<\pi'$ which are normal in $\pi$.
If $C$ is such a subgroup and $G=\pi/C$ then $G$ is finitely presentable
(in fact $FP_\infty$, since $\pi$ and $C$ are \cite[proposition 2.7]{Bi}).
A standard argument shows that $H^q(G;\Z[G])\cong{H^{q+1}(\pi;\Z[\pi])}$ for all $q$.

Since  $G$ is finitely presentable it is an HNN extension $HNN(B;\,\phi:J\to{K})$, 
with $B,J$ and $K$ finitely generated \cite{BS78}.
The associated Mayer-Vietoris sequence for cohomology 
with coefficients $\mathbb{Z}[G]$
gives a short exact sequence
\[
0\to{H^2}(B;\mathbb{Z}[G])\to{H^2}(J;\mathbb{Z}[G])\to{H^3}(G;\mathbb{Z}[G])\cong\Z\to0,
\]
since $H^2(G;\Z[G])=0$ and $H^3(G;\Z[G])\cong\Z$.
If the base $B$ is $FP_2$ then it follows from \cite[Lemma 3.4]{BG85}
(as in Theorem \ref{HNNs}) that the HNN extension cannot be properly ascending.
Since $C<\pi'$ and $\pi'$ is not finitely generated it follows that $\pi$ cannot be an
ascending HNN extension.

If $\pi''$ is finitely generated then $\pi/\pi''$ is finitely presentable,
and so is an HNN extension over a finitely generated base \cite{BS78}.
Since $\pi/\pi''$ is metabelian the extension is ascending.
The HNN structure lifts to an ascending HNN structure for $\pi$,
and the base $B$ is again finitely generated,
since $\pi''$ is finitely generated.
Hence $B$ is $FP_2$, since $\pi$ is almost coherent.
This contradicts the conclusion of the first paragraph, 
and so $\pi''$ is not finitely generated.

It follows also that if $\pi$ is a properly ascending HNN extension then $[\pi:C_\pi(A)]=\infty$.
If $\pi''$ were $FP_3$ then it would be $FP$.
Let $\tau$ be the preimage in $\pi$ of the torsion subgroup of $\pi'/\pi''$.
Then $\tau$ is also $FP$, since it is torsion-free and $[\tau:\pi'']$ is finite,
and $\pi/\tau$ is torsion-free metabelian of finite Hirsch length.
Hence $\cd\pi=\cd\tau+\cd\pi/\tau$ \cite[Theorem 5.5]{Bi},
and so $\cd\pi/\tau=1$.
Since $\pi/\tau$ is solvable, this is only possible if $\tau=\pi'$.
But $\pi'$ is not finitely generated, 
and so $\pi''$ cannot be $FP_3$.

Since $\pi$ is a $PD_4$-group, $\pi''$ is not abelian, for otherwise $\pi$
would be polycyclic and all subgroups would be finitely generated.
Since $[A:A\cap\pi'']$ is finite, by Theorem 6,
$A\cap\pi''$ is not finitely generated, and so $\cd(A\cap\pi'')=2$.
Since it is central in $\pi''$, 
it follows that $\cd\pi''>2$ \cite[Theorem 8.8]{Bi}.
Hence $\cd\pi''=\cd\pi'=3$.
\end{proof}

\section{appendix}

The group $\Phi$ has the Wirtinger presentation $\langle{t, u,v}\mid {vuv^{-1}=t},~tut^{-1}=v\rangle$,
which is realized by an immersion $R:D^3\looparrowright{S^4}$ with two ribbon singularities.
(\cite{Yaj69} -- see also  \cite[Chapter 1.7]{AIL}.)
The preimage of the double-point set consists of two properly embedded 2-discs,
which divide $D^3$ into the union of three portions $D_-$, $D^2\times[-1,1]$ and $D_+$,
and two 2-discs $d_-$ and $d_+$ in the interiors of $D_-$ and $D_+$, respectively.
The immersion $R$ is an embedding on the complement of these four 2-discs, 
while $R(d_-)=R(D^2\times\{1\})$ and  $R(d_+)=R(D^2\times{-1})$.
Let $N_\pm$ be a closed regular neighbourhood of $d_\pm$ in the interior of $D_\mp$. 
Then $V=\overline{D_-\setminus{N_-}}\cup_{\partial{N_-}}S^2\times[-1,1]\cup_{\partial{N_+}}
\overline{D_+\setminus{N_+}}$ is a 1-punctured copy of $S^2\times{S^1}$,
and $R|_{D\setminus{N_-\cup{N_+}}}$ extends to an embedding of $W$ into $S^4$,
with the image of $S^2\times[-1,1]$ being part of the boundary of a regular neighbourhood 
of $R(D^2\times[-1,1])$ in $S^4$.
Hence $\Phi$ is the group of a ribbon 2-knot $K=R|_{\partial{D^3}}$ with a Seifert hypersurface  
$V\cong\overline{S^1\times{S^2}\setminus{D^3}}$.
This is minimal, since $\pi_1(V)\cong\Z$ and $\Phi'\not=1$.

\smallskip
\noindent{\it Acknowledgment.} I am grateful to T. Kanenobu for his advice on this example.

\newpage

\end{document}